\def\newmathop#1{\expandafter\gdef\csname #1\endcsname{\mathop{\rm #1}\nolimits}}
\def\newvmathop#1{\expandafter\gdef\csname v#1\endcsname{\mathop{\rm #1}\nolimits}}
\theoremstyle{plain}
\newcounter{thmcount}[section]
\newtheorem{theorem}[thmcount]{Theorem}
\newtheorem{lemma}[thmcount]{Lemma}
\newtheorem*{conjecture*}{Conjecture}
\newtheorem*{theorem*}{Theorem}
\newtheorem*{corollary*}{Corollary}
\newtheorem*{lemma*}{Lemma}
\newtheorem*{rough*}{Rough idea}
\newtheorem*{motivation*}{Motivation}
\newtheorem*{goal*}{Goal}
\theoremstyle{definition}
\newtheorem{example}[thmcount]{Example}
\newtheorem*{example*}{Example}
\newtheorem*{remark*}{Remark}
\newtheorem{definition}[thmcount]{Definition}
\numberwithin{equation}{section}
\def\Q{{\mathbb Q}}
\def\Fpb{{\overline{\mathbb F}_p}}
\def\Qp{{{\mathbb Q}_p}}
\def\Qpb{{\overline{\mathbb Q}_p}}
\def\Z{{\mathbb Z}}
\def\C{{\mathbb C}}
\DeclareMathOperator{\Ha}{Ha}
\renewcommand{\det}{\operatorname{det}}
\newcommand{\mtwo}[4]{\begin{pmatrix} #1 & #2  \\ #3 & #4 \end{pmatrix}}
\newcommand{\mtwosmall}[4]{\begin{psmallmatrix} #1 & #2  \\ #3 & #4 \end{psmallmatrix}}
\DeclareMathOperator{\Frob}{Fr}
\newcommand{\tta}[1]{\hat{#1}}
\begin{document}

\title{Geometric modularity for algebraic and non-algebraic weights}

\author{Hanneke Wiersema}
\date{\today}
\email{hw600@cam.ac.uk}
\address{University of Cambridge, Department of Pure Mathematics and Mathematical Statistics, Centre for Mathematical Sciences, Wilberforce Road, Cambridge CB3 0WB}

\begin{abstract}
In this short paper we generalise a result of Diamond--Sasaki connecting geometric modularity of algebraic weights to geometric modularity of non-algebraic weights and vice versa. In particular, we show that geometric modularity of non-algebraic weights implies geometric modularity of multiple algebraic weights, which is easy to see. More significantly, we show that geometric modularity of multiple algebraic weights sometimes implies geometric modularity of a non-algebraic weight. This connects work of Diamond--Sasaki to earlier work on generalisations of the weight part of Serre's conjecture.
\end{abstract}
\maketitle

Serre's modularity conjecture \cite{serreduke}, now a theorem of Khare and Wintenberger, asserts a connection between two-dimensional mod $p$ Galois representations of the rationals and modular forms. The strong version includes a prediction for the minimal weight $k \geq 2$ such that a modular form of weight $k$ corresponds to a given Galois representation, this is the \emph{weight part} of Serre's conjecture. Edixhoven refined the weight part by including modular forms of weight one by considering {geometric} mod $p$ modular forms in \cite{Edixhoven1992}. 
\par
More generally, it is conjectured that such a connection should also exist between mod $p$ representations of $G_F$, where $F$ is a totally real field, and Hilbert modular forms. In this setting, Diamond and Sasaki give an analogue of Edixhoven's version of the weight part of Serre's conjecture by considering geometric mod $p$ Hilbert modular forms in \cite{DS}, based on work by Andreatta and Goren \cite{andreattagoren}. Crucially, they attach Galois representations to mod $p$ Hilbert eigenforms of \emph{arbitrary} weights, extending work by Goldring and Koskivirta \cite{goldringkoskivirta} and Emerton, Reduzzi and Xiao \cite{emertonreduzzixiao} for paritious weights. 
\par
In the classical case, the geometric approach allowed for the inclusion of weight one modular forms. In the more general setting we obtain many more new weights: those of partial weight one. We wish to connect the partial weight one setting to previous work on generalisations of Serre's conjecture where the weights are algebraic (i.e. each integer in the tuple of weights is at least two), most notably the work of Buzzard, Diamond and Jarvis \cite{bdj}, and of Gee et al. \cite{geekisin},\cite{GLS14},\cite{GLS15}. In the case where $F$ is a real quadratic field in which $p$ is inert, Diamond and Sasaki proved a connection between modularity of algebraic weights and modularity of non-algebraic weights \cite[Lemma 11.3.1]{DS}. 
\par
The main theorem in this paper (Theorem \ref{mainthm}) relates modularity of non-algebraic weights to modularity of algebraic weights for arbitrary totally real $F$ in which $p$ is unramified. As we shall see, the work of generalising the result of Diamond--Sasaki mostly lies in dealing with the more complicated combinatorics and in finding explicit formulas for the weights.
\par
Therefore key to this work is, given any non-algebraic weight, finding a set of corresponding algebraic weights whose modularity will imply modularity of the non-algebraic weight (under some technical conditions). We describe this process in Section \ref{weightcombinatorics}, after first introducing the modular forms involved in Section \ref{background}. In Section \ref{statement} we give the precise statement of our main result as well as the proof. The proof uses methods introduced in \cite{DS} involving partial Hasse invariants and partial Theta operators. 
\par
The results in this paper complement the work on the parallel weight one case by Dimitrov--Wiese \cite{dimitrovwiese} and Gee--Kassaei \cite{geekassaei}. The former proved that the Galois representation attached to a mod $p$ Hilbert modular eigenform of parallel weight 1 and level prime to $p$ is unramified above $p$, and the latter proved, under some mild conditions, that if a modular representation $\rho$ is unramified at every $v \mid p$, that then there exists a form $f$ of parallel weight 1 such that $\rho \sim \rho_f$.

\subsection*{Notation}\label{ss:notation}
We let $p$ be a prime number. Fix algebraic closures $\Qpb$ and $\overline{\Q}$ of $\Qp$ and $\Q$ respectively, and fix embeddings of $\overline{\Q}$ into $\Qpb$ and $\C$. Let $F$ be a totally real field in which $p$ is unramified.  We let $\mathcal{O}_F$ be the ring of integers of $F$ and $\mathcal{O}_{F,p}=\mathcal{O}_F \otimes \Z_p$. We let $\hat{\mathcal{O}}_F$ denote the profinite completion of $\mathcal{O}_F$. Let $\Sigma$ be the set of embeddings $F \to \overline{\Q}$.

We let $L/\Q_p$ be a sufficiently large finite extension of $\Qp$ containing the image of every embedding in $\Sigma$. Let $E$ be the residue field. We identify $\Sigma$ with the set of embeddings of $F$ into $L$ and with the set of embeddings of $\mathcal{O}_F/p$ into $\Fpb$. We let $\Frob$ be the absolute Frobenius of $\Fpb$. For $v \mid p$ we denote $\Sigma_v$ for the set of embeddings of $\mathcal{O}_F/v$ into $\Fpb$. Note that Frobenius has a cyclic action on $\Sigma_v$. For any $\tau_0 \in \Sigma_v$ we let $\tau_i=\tau_{i+1}^p$ so that $\tau_i= \Frob \circ \tau_{i+1}$.  We often identify $\Sigma$ with $\prod_{v \mid p} \Sigma_v$.

For any $\tau \in \Sigma$, we let $e_{\tau}$ denote the basis element of $\Z^{\Sigma}$ associated to $\tau$.

\subsection*{Acknowledgements}
I would like to thank Fred Diamond for suggesting the problem and for helpful discussions. I would like to thank both Fred Diamond and James Newton for comments on earlier versions of this paper. As the reader will notice, the methods of this paper owe a debt to the work of Diamond--Sasaki and it is a pleasure to thank them both for conversations about their work.

\section{Background}
\label{background}
\subsection{Mod $p$ Hilbert modular forms}
In this paper mod $p$ Hilbert modular forms will be sections of automorphic line bundles on Hilbert modular varieties of level prime to $p$ in characteristic $p$ as defined in \cite[Def 3.2.2]{DS}. We let $M_{k,l}(U;E)$ be the space consisting of all mod $p$ Hilbert modular forms $f$ of weight $(k,l) \in \Z^\Sigma \times \Z^\Sigma$ and of level $U$, where $U$ is any open compact subgroup of $\GL_2(\hat{\mathcal{O}}_F)$ containing $\GL_2(\mathcal{O}_{F,p})$, and with coefficients in $E$. Recall we say a weight $(k,l)$ is \emph{algebraic} if $k \in \Z_{\geq 2}^{\Sigma}$.

We will need that the minimal weight of a mod $p$ Hilbert modular form as in \cite[Section 5.2]{DS}  lies in a minimal cone $\Xi_{\min}$ \cite[Corollary 1.2]{DiamondKassaei}:
\[
\Xi_{\min} = \{ k \in \Z^{\Sigma} \, \vert \, pk_{\tau} \geq k_{\Frob^{-1} \circ \tau} \text{ for all } \tau \in \Sigma \}.
\]

If $\mathfrak{n}$ is a prime ideal of $\mathcal{O}_F$ we define $U_1(\mathfrak{n})$ as the open compact subgroup of $\GL_2(\hat{\mathcal{O}}_F)$ given by
\[
U_1(\mathfrak{n})=\left\{ \mtwo{a}{b}{c}{d} \in \GL_2(\hat{\mathcal{O}}_F) \, \vert \, c, d-1 \in \mathfrak{n} \hat{\mathcal{O}}_F \right\}.
\]

In the following, we write $S_v$ and $T_v$ for the Hecke operators defined in \cite[Section 4.3]{DS} (for $v \nmid p$ and arbitrary weight $(k,l)$) and \cite[Section 9.7]{DS} (for $v \mid p$ and for algebraic weight $(k,0)$).

\subsection{Partial Hasse invariants and partial $\Theta$ operators}
\label{partoperators}
For any $\tau \in \Sigma$, we let $\Ha_{\tau}$ denote the partial Hasse invariant defined in \cite[Section 5.1]{DS}. We write $k_{\Ha_{\tau}}= p e_{\Frob^{-1} \circ \tau} - e_{\tau}$ for the weight of $\Ha_{\tau}$. Multiplication by $\Ha_{\tau}$ gives a new mod $p$ Hilbert modular form $\Ha_{\tau}f$ if $f$ has level $U$ where $U$ is a subgroup of $\GL_2(\mathbb{A}_F^{\infty})$ containing $\GL_2(\mathcal{O}_{F,p})$; this is Proposition 5.1.1 of \cite{DS}. In particular, we have an injective map
\[
M_{k,l}(U;E) \to M_{k+k_{\Ha_{\tau}},l}(U;E),
\]
commuting with $T_v$ and $S_v$ for all $v \nmid p$ such that $\GL_2(\mathcal{O}_{F,v}) \subset U$.

For any $\tau \in \Sigma$, we let $\Theta_{\tau}$ denote the partial $\Theta$ operator defined in \cite[Definition 8.2.1]{DS}. By Theorem 10.4.2 of \cite{DS}, for any $f \in M_{k,l}(U;E)$ and $\tau \in \Sigma$ we obtain a new Hilbert modular form $\Theta_{\tau}(f) \in M_{\tta{k},\tta{l}}(U;E)$ where
\begin{itemize}
\item if $\Frob \circ \tau=\tau$, then $\tta{k}_{\tau}=k_{\tau}+p+1$ and $\tta{k}_{\tau'}=k_{\tau'}$ if $\tau' \neq \tau$;
\item if $\Frob \circ \tau \neq \tau$, then $\tta{k}_{\tau}=k_{\tau}+1, \tta{k}_{\Frob^{-1} \circ \tau}=k_{\Frob^{-1} \circ \tau}+p$ and $\tta{k}_{\tau'}=k_{\tau'}$ if $\tau \not \in \{ \Frob^{-1} \circ \tau, \tau\}$;
\item $\tta{l}_{\tau}=l_{\tau}-1$, and $\tta{l}_{\tau'}=l_{\tau'}$ if $\tau' \neq \tau$.
\end{itemize}

\subsection{Properties of Hilbert modular eigenforms}
In this section we assume $(k,l)$ is algebraic. We introduce some properties of eigenforms, details of each can be found in \cite[Section 10]{DS}.

\subsubsection{Twisted forms}
Let $\mathfrak{m}$ be an ideal of $\mathcal{O}_F$ such that $\mathfrak{m} \mid \mathfrak{n}$. We let $V_{\mathfrak{m}} \subset \hat{\mathcal{O}}^{\times}_F$ denote the kernel of the natural projection to $(\mathcal{O}_F/\mathfrak{m})^{\times}$. We say a character
\[
\xi: \{ a \in (\mathbb{A}_F^{\infty})^{\times} \mid a_p \in \mathcal{O}_{F,p}^{\times}/V_{\mathfrak{m}}\} \to E^{\times},
\]
is a character of weight $l' \in \Z^{\Sigma}$ if $\xi(\alpha)=\overline{\alpha}^{l'}$ for all $\alpha \in F_{+}^{\times} \cap \mathcal{O}_{F,p}^{\times}$.

For any $f \in M_{k,l}(U_1(\mathfrak{n});E)$ and any character of weight $l'$ and conductor $\mathfrak{m}$, we write $f_{\xi} \in M_{k,l+l'}(U_1(\mathfrak{n}\mathfrak{m}^2);E)$ for the twisted form defined in \cite[Section 10.3]{DS}.

\subsubsection{Normalised eigenforms}
A \emph{normalised eigenform} is a Hilbert modular form $f$ which is an eigenform for Hecke operators $T_v$ for all $v \nmid p$ and $S_v$ for all $v \nmid \mathfrak{n}p$, and where all $f_{\xi}$, twists of $f$ by characters $\xi$ of weight $-l$ and conductor $\mathfrak{m}$ coprime to $p$, with values in extensions $E'$ of $E$, are eigenforms for $T_v$ for all $v \mid p$. Finally, the coefficient of $q$ in the $q$-expansion should be $1$. If $f$ is a normalised eigenform in $M_{k,l}(U_1(\mathfrak{n}),E)$ and $\xi$ is a character of weight $-l'$ and conductor $\mathfrak{m}$ coprime to $p$, then $f_{\xi}$ is a normalised eigenform in $M_{k,l+l'}(U_1(\mathfrak{n}\mathfrak{m}^2);E)$.

\subsubsection{Stabilised eigenform}
A normalised eigenform $f \in M_{k,l}(U_1(\mathfrak{n});E)$ is \emph{stabilised} if $T_vf=0$ for all $v \mid \mathfrak{n}$. A stabilised eigenform is \emph{strongly stabilised} if and only if $T_v f_{\xi}=0$ for all $v \mid p$ and characters $\xi$ of weight $-l$.

Let us note that this property is crucial to the proof of the main result. We need that there is at most one strongly stabilised eigenform of a given level and weight giving rise to a given Galois representation, as we will see.

If $f$ is a normalised (resp. stabilised, strongly stabilised) eigenform, then the same is true for both $\Ha_{\tau}f$ and $\Theta_{\tau}(f)$ for any $\tau$ (assuming $k_{\tau} \geq 3$ if $\tau \neq \Frob \circ \tau$ in the case of $\Ha_{\tau}f$). This is \cite[Remark 10.6.6]{DS}.

\subsection{Geometric modularity}
Let $U$ be an open compact subgroup $U \subset \GL_2(\hat{\mathcal{O}}_F)$ containing $\GL_2(\mathcal{O}_{F,p})$. Let $Q$ be a finite set of primes containing all $v \mid p$ and all $v$ such that $\GL_2(\mathcal{O}_{F,v}) \not \subset U$. Suppose $f \in M_{k,l}(U;E)$ is an eigenform for $T_v$ and $S_v$ for all $v \not \in Q$, then we write $\rho_f$ for the Galois representation attached to $f$ as in \cite[Theorem 6.1.1]{DS}.

\begin{definition}
An irreducible, continuous and totally odd Galois representation $\rho: G_F \to \GL_2(\Fpb)$ is \emph{geometrically modular of weight $(k,l)$} if $\rho$ is equivalent to the extension of scalars of $\rho_f$ for some eigenform $f \in M_{k,l}(U;E)$ as above. 
\end{definition}

This means $\rho$ is geometrically modular of weight $(k,l)$ if there is a non-zero element $f \in M_{k,l}(U;E)$ for some $U \supset \GL_2(\mathcal{O}_{F,p})$ and $E \subset \Fpb$ such that
\[
T_v f=\tr(\rho(\Frob_v))f \quad \text{ and } \text{Nm}_{F/\Q}(v) S_v f = \det(\rho(\Frob_v))f,
\]
for all but finitely many primes $v$.

In the following theorem we collect some of the results we need for our proof, which are results from \cite{DS}.

\begin{theorem}
\label{DSmain}
Suppose $\rho: G_F \to \GL_2(\Fpb)$ is irreducible and geometrically modular of weight $(k,l)$.
\begin{enumerate}
\item Then $\rho$ arises from an eigenform $f$ of weight $(k,l)$ and level $U_1(\mathfrak{n})$ for some $\mathfrak{n}$ prime to $p$.
\item Moreover, we can take $f$ such that $\Theta_{\tau}(f) \neq 0$ for all $\tau \in \Sigma$.
\item If $k_{\tau} \geq 2$ for all $\tau$, then $f$ can moreover be taken to be a normalised eigenform of level $U_1(\mathfrak{n})$ for some $\mathfrak{n}$ prime to $p$.
\item Moreover, $\rho$ is also geometrically modular of weight $(k+k_{\Ha_{\tau}},l)$ for any $\tau \in \Sigma$.
\item Moreover, $\rho$ is also geometrically modular of weight $(\tta{k},\tta{l})$ as in Section \ref{partoperators} for any $\tau \in \Sigma$.
\end{enumerate}
\end{theorem}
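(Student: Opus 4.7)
All five statements are collected from results in \cite{DS}, so my plan is to assemble them in a sequence that makes the dependencies explicit rather than to reprove them from scratch. I expect the presentation to proceed by first refining the level, then refining the form inside its weight, and finally reading off the Hasse and Theta statements.

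For part (1), I would start from any witness $f' \in M_{k,l}(U;E)$ of modularity. Because $U \supset \GL_2(\mathcal{O}_{F,p})$ the level of $U$ is unramified at $p$, so for any ideal $\mathfrak{n}$ prime to $p$ with $U_1(\mathfrak{n}) \subseteq U$ the pullback of $f'$ is an eigenform $f$ of level $U_1(\mathfrak{n})$ with the same Hecke eigenvalues for $T_v$, $S_v$ away from $\mathfrak{n}p$, and hence the same attached Galois representation $\rho$.

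For part (2), I would exploit the description of $\ker(\Theta_\tau)$ from \cite[Section 10]{DS}: if $\Theta_\tau(f) = 0$ then $f$ lies in the image of multiplication by some partial Hasse invariant $\Ha_{\tau'}$. By Proposition 5.1.1 of \cite{DS}, such multiplication is injective and Hecke-equivariant away from $p$, so $f/\Ha_{\tau'}$ is an eigenform of strictly smaller weight still lying in $\Xi_{\min}$ and attached to the same $\rho$. I would iterate this descent until no $\Theta_\tau$ annihilates the form; termination is guaranteed because the weight coordinates strictly decrease inside $\Xi_{\min}$. Combined with part (1), this produces the desired $f$ of level $U_1(\mathfrak{n})$ with $\Theta_\tau(f) \neq 0$ for every $\tau \in \Sigma$.

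For part (3), under the algebraic hypothesis $k_\tau \geq 2$ I would use the $q$-expansion principle for Hilbert modular forms recalled in \cite{DS} to rescale $f$ so that its $q$-coefficient is $1$, and then verify the twist conditions of the normalised eigenform definition using the Hecke action at $v \mid p$ described in \cite[Section 9.7]{DS}. Parts (4) and (5) are then immediate from Proposition 5.1.1 and Theorem 10.4.2 of \cite{DS} recalled in Section \ref{partoperators}: both $\Ha_\tau f$ and $\Theta_\tau(f)$ are eigenforms sharing the Hecke eigenvalues of $f$ away from $p$ and the level, so they attach to the same $\rho$, witnessing modularity in weights $(k + k_{\Ha_\tau}, l)$ and $(\tta{k}, \tta{l})$ respectively. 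The hardest point is the Hasse descent in part (2): while a single descent step is easy, ensuring simultaneous nonvanishing of $\Theta_\tau(f)$ for every $\tau$ requires a combinatorial argument about how $k$ meets the boundary of $\Xi_{\min}$ from \cite[Corollary 1.2]{DiamondKassaei}, showing that dividing out one $\Ha_{\tau'}$ does not introduce a new divisibility at some other embedding.
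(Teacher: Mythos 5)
Your overall strategy --- assembling the five parts from the relevant results of \cite{DS} --- is exactly what the paper does: its proof consists of citing \cite[Lemma 10.2.1]{DS} for (1), \cite[Lemma 10.4.1]{DS} for (2), \cite[Proposition 10.5.2]{DS} for (3), and the Hecke-equivariance of $\Ha_{\tau}$ and $\Theta_{\tau}$ (the latter via \cite[Theorem 10.4.2]{DS}) for (4) and (5). Your treatment of (3), (4) and (5) matches this. However, two of your reconstructed arguments would not go through as written.

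For (1), you assume there is an ideal $\mathfrak{n}$ prime to $p$ with $U_1(\mathfrak{n}) \subseteq U$. This containment generally fails: an arbitrary open compact $U \supset \GL_2(\mathcal{O}_{F,p})$ can be as small as $\GL_2(\mathcal{O}_{F,p})$ times a principal congruence subgroup $U(\mathfrak{m})$ away from $p$, and $U_1(\mathfrak{n})$ never sits inside such a group because its entries $a$, $b$ are unconstrained modulo $\mathfrak{m}$; one has $U(\mathfrak{n}) \subseteq U_1(\mathfrak{n})$, not the reverse. So ``pulling back'' is not available, and the passage to level $U_1(\mathfrak{n})$ genuinely requires the argument of \cite[Lemma 10.2.1]{DS} (producing a suitable $U_1(\mathfrak{n})$-eigenvector inside the span of translates of the given form), not restriction of the form. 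For (2), your characterisation of $\ker \Theta_{\tau}$ is wrong in a way that breaks the proposed descent: multiplication by a partial Hasse invariant does not affect $q$-expansions, so $\Theta_{\tau}(\Ha_{\tau'} g)$ has no reason to vanish and the kernel of $\Theta_{\tau}$ is not the image of multiplication by some $\Ha_{\tau'}$. Vanishing of a theta operator is instead tied to partial Frobenius/$p$-th power phenomena in the $q$-expansion, and \cite[Lemma 10.4.1]{DS} disposes of that possibility by a different argument (using irreducibility of $\rho$ and minimality of the weight), not by dividing out Hasse invariants. Since the paper's own proof leans entirely on the cited lemmas, the safe course is to do the same; if you wish to sketch their internal arguments, these two points need to be repaired.
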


\begin{proof}
\begin{enumerate}
\item This is \cite[Lemma 10.2.1]{DS}.
\item This is \cite[Lemma 10.4.1]{DS}.
\item This is \cite[Proposition 10.5.2]{DS}.
\item This follows from the commutativity of $\Ha_{\tau}$ with the Hecke operators.
\item This is \cite[Theorem 10.4.2]{DS}, which follows from $(2)$ and commutativity of $\Theta_{\tau}$ with the Hecke operators. \qedhere
\end{enumerate}
\end{proof}

\section{Weight combinatorics}
\label{weightcombinatorics}
In this section we prepare to state the main result connecting modularity of algebraic weights to non-algebraic weights and vice versa. We introduce the Hilbert modular forms with algebraic weights we associate to Hilbert modular forms of some non-algebraic weight.

\subsection{The algebraic weights}
\label{algweights}
Suppose $f$ is a non-zero mod $p$ Hilbert modular form with non-algebraic weight $(k,0)$ such that $k_{\tau}>0$ for all $\tau \in \Sigma$. We associate multiple Hilbert modular forms to it, all with algebraic weights. The first form is obtained from multiplication by Hasse invariants, and the other ones from multiplication by Hasse invariants and then acting on it with a partial Theta operator.

\begin{definition}
We let $M$ be the subset of $\Sigma$ consisting of $\tau \in \Sigma$ such that
\[
 k_{\Frob^{-1} \circ \tau}=\dots =k_{\Frob^{(1-s)} \circ \tau}=2 \text{ and } k_{\Frob^{-s} \circ \tau}=1,
\]
for some $s \geq 1$ (the first condition is vacuous if $s=1$).
\end{definition}

\subsubsection*{The definition of $k'$}
We let $f'$ be the modular form 
\begin{align}
\label{defkprime}
f \cdot \prod_{\tau \in M} \Ha_{\tau}. 
\end{align}
We denote $(k',l')$ for the weight of $f'$.  We have $l'=l=(0, \dots,0)$. As the weight of $\Ha_{\tau}$ is $p e_{\Frob^{-1} \circ \tau} - e_{\tau}$ the weight of $f'$ can easily be seen to be algebraic.

\begin{example}
Say $k=(1,1,k_2)$. If $k_2 \geq 3$ we multiply $f$ by $\Ha_2$ and $\Ha_0$ to obtain $f'$ of weight $(k',0)$ with $k'=(p,p+1,k_2-1)$. If $k_2=2$, then we multiply $f$ by $\Ha_2$, $\Ha_0$ and $\Ha_1$ to obtain $f'$ of weight $(k',0)$ with $k'=(p,p,p+1)$.
\end{example}

\subsubsection*{The definition of $k^{\mu}$}

We define several modular forms $f^{\mu}$ obtained from $f$ by using partial Hasse invariants and partial Theta operators. 

\begin{definition}
Let $\tilde{M}$ be the set of $\tau \in \Sigma$ such that one of the following conditions holds:
\begin{itemize}
\item  $k_{\tau} \geq 3$ and $\tau \in M$,
\item $k_{\tau}=2, k_{\Frob^{-1} \circ \tau}=1, k_{\Frob^{-2} \circ \tau}=1$ or $2$ and if $k_{\Frob^{-2} \circ \tau}=2$ then $\Frob^{-2} \circ \tau \in M$.
\end{itemize}
\end{definition}

Note that if there is no case where $k_{\tau}=2$ and $k_{\Frob^{-1} \circ \tau}=1$, that $\tilde{M}$ is just the set $\tau \in \Sigma$ such that $k_{\tau} \neq 1, k_{\Frob^{-1} \circ \tau}=1$.

Let $\mu \in \tilde{M}$, then we define $f^{\mu}$ to be the form given by
\begin{align}
\label{defkmu}
\Theta_\mu \left( f \cdot \prod_{\tau \in {M} \setminus \{\mu\}} \Ha_{\tau} \right).
\end{align}
We denote $(k^{\mu},l^{\mu})$ for the weight of $f^{\mu}$.  We have $l^{\mu}= - e^{\mu}$. The Theta operator adds $p e_{\Frob^{-1} \circ \tau} + e_{\tau}$ to the weight of the form it acts on. It can easily be verified that the weight of $f^{\mu}$ is algebraic. 

\begin{example}
Say $k=(1,1,k_2)$ with $k_2 \geq 3$, then we multiply $f$ by $\Ha_0$ and act on this with $\Theta_2$ to obtain $f^{\mu}$ of weight $(k^{\mu},l^{\mu})=((p,p+1,k_2+1),(0,0,-1))$.
If $k_2 =2$, then we multiply $f$ by $\Ha_0$ and $\Ha_1$, act on this with $\Theta_2$ to obtain $f^{\mu}$ of weight $(k^{\mu},l^{\mu})=((p,p,p+3),(0,0,-1))$.
\end{example}

\section{Statement and proof of the theorem}
\label{statement}

\begin{theorem}
\label{mainthm}
Let $G_F \to \GL_2(\Fpb)$ be an irreducible representation. Suppose $\rho$ is geometrically modular of a non-zero non-algebraic weight $(k,0)$ such that $k \in \Xi_{\min}$ and $k_{\tau}>0$ for all $\tau \in \Sigma$. Then 
\begin{enumerate}
\item $\rho$ is geometrically modular of weight $(k',0)$,
\item for each $\mu \in \tilde{M},$ $\rho$ is geometrically modular of weight $(k^{\mu},l^{\mu})$.
\end{enumerate}
Moroever, the converse holds if we assume in addition that
\begin{enumerate}[resume]
\item for $\mu \in \tilde{M}$ let $f^{\mu}$ be the form as defined above. For each $\mu \in \tilde{M}$ we have $T_v((f^{\mu})_{\xi})=0$ for all $v \mid p$ and characters $\xi$ of weight $-l^{\mu}$,
\item there is no place $v \mid p$ such that $k_{\tau}=1$ for all $\tau \in \Sigma_v$, 
\item $k_{\mu} \not \equiv 1 \mod p$ for all $\mu \in \tilde{M}.$
%\item $p$ is inert in $F$ and for the prime $v|p$ the local representation $\rho|_{G_{F_v}}$ is not of the form $\mtwosmall{\chi_1}{\ast}{0}{\chi_2}$ where $\chi_1|_{I_{F_v}}=\omega_{\mu}$ for any $\mu \in \tilde{M}$, 
\end{enumerate}
\end{theorem}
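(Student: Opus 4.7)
The forward direction follows by iterating Theorem \ref{DSmain}. For part (1), starting from an eigenform $f$ of weight $(k,0)$ giving $\rho$, apply Theorem \ref{DSmain}(4) once per $\tau\in M$: the cumulative weight shift $\sum_{\tau\in M}k_{\Ha_\tau}$ equals $k'-k$ by the definition of $k'$, so the resulting form has weight $(k',0)$ and still attaches to $\rho$. For part (2), fix $\mu\in\tilde M$: apply Theorem \ref{DSmain}(4) for each $\tau\in M\setminus\{\mu\}$, then Theorem \ref{DSmain}(5) with $\tau=\mu$. The resulting weight matches $(k^\mu,l^\mu)$ from Section \ref{algweights} by direct substitution, handled in the cases $\Frob\circ\mu\neq\mu$ and $\Frob\circ\mu=\mu$ separately.

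For the converse, the plan is to realise $\rho$ at weight $(k,0)$ via the quotient $F:=F'/\prod_{\tau\in M}\Ha_\tau$ of a strongly stabilised eigenform $F'$ of weight $(k',0)$ attached to $\rho$. Using Theorem \ref{DSmain}(1), (3) and hypothesis (3), upgrade the given forms to strongly stabilised normalised eigenforms $F'\in M_{k',0}(U_1(\mathfrak n);E)$ and $F^\mu\in M_{k^\mu,l^\mu}(U_1(\mathfrak n);E)$ for $\mu\in\tilde M$, of a common level with $\mathfrak n$ prime to $p$. Hypothesis (3) is precisely the extra condition promoting each $F^\mu$ from stabilised to strongly stabilised at primes above $p$.

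Next, for each $\mu\in\tilde M$, apply $\Theta_\mu$ to $F'$: by \cite[Remark 10.6.6]{DS} (recalled in Section \ref{background}), $\Theta_\mu(F')$ is again a strongly stabilised normalised eigenform giving $\rho$. A direct weight calculation in $\Z^\Sigma$ — yielding $k^\mu-k'=2e_\mu$ in both the split and inert cases for $\mu$ — shows that $\Theta_\mu(F')$ and $\Ha_\mu F^\mu$ lie in the same space $M_{k^\mu+k_{\Ha_\mu},-e_\mu}(U_1(\mathfrak n);E)$; the bound $k^\mu_\mu\geq 3$ needed for $\Ha_\mu F^\mu$ to be strongly stabilised follows from $\mu\in\tilde M$ (which forces $k_\mu\geq 2$). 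By the uniqueness of strongly stabilised eigenforms of given level and weight attached to $\rho$, these forms must coincide up to scalar: $\Theta_\mu(F')=c_\mu\Ha_\mu F^\mu$, so $\Theta_\mu(F')$ is divisible by $\Ha_\mu$ for every $\mu\in\tilde M$.

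The main obstacle is to upgrade this to divisibility of $F'$ itself by $\Ha_\tau$ for every $\tau\in M$. For $\mu\in\tilde M$, hypothesis (5) ($k_\mu\not\equiv 1\pmod p$) is the partial-weight analogue of the classical Theta-filtration criterion: it should ensure that divisibility of $\Theta_\mu(F')$ by $\Ha_\mu$ forces divisibility of $F'$ by $\Ha_\mu$, via a Leibniz-style expansion of $\Theta_\mu(\Ha_\mu\cdot G)$ whose leading contribution involves the nonzero scalar $k_\mu-1\bmod p$. For $\tau\in M\setminus\tilde M$ (typically the case $k_\tau=1$, or the transitional weight-$2$ patterns excluded from $\tilde M$), divisibility has to be propagated along Frobenius orbits at places above $p$: the chain structure built into the definition of $M$ links such $\tau$ to nearby embeddings where Theta information is available, and hypothesis (4) (no $v\mid p$ with $k_\tau=1$ throughout $\Sigma_v$) is exactly what ensures these chains terminate. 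Granting divisibility for every $\tau\in M$, the quotient $F\in M_{k,0}(U_1(\mathfrak n);E)$ is nonzero, and the Hecke-equivariance and injectivity of each multiplication by $\Ha_\tau$ imply $F$ is an eigenform giving $\rho$, completing the proof.
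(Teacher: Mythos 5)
Your forward direction and the first half of your converse track the paper's argument: you pass to stabilised eigenforms of a common level prime to $p$, match $\Theta_\mu(F')$ with $\Ha_\mu F^\mu$ (your computation $k^\mu-k'=2e_\mu$ is correct) using uniqueness of strongly stabilised eigenforms attached to $\rho$, and then invoke the criterion that is \cite[Theorem 8.2.2]{DS} --- this is where hypothesis (5) enters --- to divide $F'$ by $\Ha_\mu$ for each $\mu\in\tilde M$. One inaccuracy there: hypothesis (3) makes each $F^\mu$ strongly stabilised, but nothing you cite makes $F'$ itself strongly stabilised, so you cannot deduce strong stabilisation of $\Theta_\mu(F')$ from \cite[Remark 10.6.6]{DS}; the paper instead applies \cite[Proposition 9.4.1]{DS}, which asserts that $\Theta_\mu$ of a merely stabilised form is already strongly stabilised.

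The genuine gap is the final step, which you explicitly defer (``Granting divisibility for every $\tau\in M$\dots''). After dividing out $\prod_{\mu\in\tilde M}\Ha_\mu$ one has a form $g$ of weight $k''=k'-\sum_{\mu\in\tilde M}k_{\Ha_\mu}$, and it remains to divide by $\Ha_\tau$ for every $\tau\in M\setminus\tilde M$. Your proposed mechanism --- propagating ``Theta information'' along Frobenius orbits --- is not what makes this work: no further Theta operators are available at these $\tau$ (that is precisely why they were excluded from $\tilde M$). The tool the paper uses is \cite[Theorem 1.1]{DiamondKassaei}: a nonzero form whose weight violates the minimal-cone inequality $pk''_\tau\geq k''_{\Frob^{-1}\circ\tau}$ is divisible by $\Ha_\tau$. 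A case analysis on the shape of $k$ near each $\tau\in M\setminus\tilde M$ (strings of $1$'s or $2$'s terminating at an element of $\tilde M$, whose existence is guaranteed by hypothesis (4)) shows that $k''$ has an entry $0$ or $1$ sitting next to an entry $p$ or $p+1$, so the cone inequality fails at the head of each such chain; dividing there and iterating (Lemma \ref{HasDiv}) propagates divisibility along the whole chain of consecutive embeddings. Without this cone-divisibility input the descent from weight $k''$ to weight $k$ is unproved, so as written your argument establishes the converse only when $M=\tilde M$.
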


We note first that the theorem holds just as well for $(k,l)$ (with obvious adjustments to Section \ref{algweights}), we take $l=0$ to ease exposition.

The condition (3) is implied by demanding that for all primes $v|p$ the local representation $\rho|_{G_{F_v}}$ is not of the form $\mtwosmall{\chi_1}{\ast}{0}{\chi_2}$ where $\chi_1|_{I_{F_v}}=\omega_{\mu}$ for any $\mu \in \tilde{M}$ and applying \cite[Corollary 10.7.2]{DS}. The reason for this condition is to ensure we obtain strongly stabilised forms which we will need to show divisibility by certain Hasse invariants. The key to the proof is that there is at most one strongly stabilised eigenform $f  \in M_{k,l}(U_1(\mathfrak{n};E))$ giving rise to $\rho$.

We exclude the weights in (4) as these would require methods similar to the aforementioned \cite{dimitrovwiese} and \cite{geekassaei}. The weights in (5) are excluded as our methods break down for that case.

\begin{proof}
First assume $\rho$ is geometrically modular of $f \in M_{(k,0)}(U;E)$. By Theorem \ref{DSmain}(4) it is also geometrically modular of weight $k'$ by multiplying $f$ by the Hasse invariants as in (\ref{defkprime}). Now for each $f^{\mu}$ we proceed similarly, we multiply by Hasse invariants and act on the result by partial Theta operators as in (\ref{defkmu}) in which case it follows from Theorem \ref{DSmain}(4-5).
\par
For the converse, suppose we have  (1)-(5). 
By (1) and Theorem \ref{DSmain}(3) $\rho$ arises from a normalised eigenform in $M_{{k'},{0}}(U_1(\mathfrak{m});E)$ for an ideal $\mathfrak{m}$ prime to $p$ (and sufficiently large $E$). 

From (2), and for any $\mu \in \tilde{M}$, Theorem \ref{DSmain}(3) again implies that $\rho$ arises from a normalised eigenform in $M_{{k}^{\mu},{l}^{\mu}}(U_1(\mathfrak{m}_{\mu});E)$ for some ideal $\mathfrak{m}_{\mu}$ prime to $p$.

Let $\mathfrak{n}=\mathfrak{m}^2  \prod_{\mu \in \tilde{M}} \mathfrak{m}_{\mu}^2$. Then $\mathfrak{n} \subset \mathfrak{m}$ satisfies the conditions in \cite[Lemma 10.6.2]{DS}, so this means $\rho$ arises from a stabilised eigenform $f' \in M_{k',0}(U_1(\mathfrak{n});E)$. Similarly, for each $\mu \in \tilde{M}$, the ideal $\mathfrak{n} \subset \mathfrak{m}_{\mu}$  satisfies the conditions in \cite[Lemma 10.6.2]{DS} with $\mathfrak{m}=\mathfrak{m}_{\mu}$, so this means $\rho$ arises from a stabilised eigenform $f^{\mu} \in M_{{k}^{\mu},{l}^{\mu}}(U_1(\mathfrak{n});E)$.

Fix $\mu \in \tilde{M}$. By \cite[Proposition 9.4.1]{DS} $\Theta_{\mu} (f')$ will be a strongly stabilised eigenform of weight $(k'+p e_{\mu} + e_{\Frob^{-1} \circ \mu}, - e_{\mu})$ and level $\mathfrak{n}$.

By (3) $\Ha_{\mu}(f^{\mu})$ is a strongly stabilised eigenform of weight $(k^{\mu}+p e_{\mu} - e_{\Frob^{-1} \circ \mu}, - e_{\mu})$ and level $\mathfrak{n}$. 

Now by \cite[Lemma 10.6.5]{DS}, the modular form that $\rho$ arises from with these specifics is unique so that
\[
\Theta_{\mu}(f')=\Ha_{\mu}(f^{\mu}).
\]
By Theorem 8.2.2 of \cite{DS} and (5) we know that $f'$ is divisible by $\Ha_{\mu}$.

We can do this step for every $\mu \in \tilde{M}$ to find that $\rho$ is geometrically modular of a form $g$ such that
\begin{align}
\label{kg}
f'=g \prod_{\mu \in \tilde{M}} \Ha_{\mu}.
\end{align}
The form $g$ is of level $\mathfrak{n}$ and of weight $k' - \sum_{\mu \in \tilde{M}} k_{\Ha_{\mu}}$. If this equals $k$, the proof is done. 

If not, this means that the set $M':=M \setminus \tilde{M}$ is non-empty. We finish the proof by showing $g$ is divisible by $\prod_{\tau \in {M'}} \Ha_{\tau}$. Write $k''$ for the weight of $g$. Suppose $\tau \in M'$, then $\tau$ falls into one of the following three cases:
\begin{enumerate}[label=\roman*]
\item $k_{\tau}=1, k_{\Frob^{-1} \circ \tau}=1$,
\item $k_{\tau}=2, k_{\Frob^{-1} \circ \tau}=1, k_{\Frob^{-2} \circ \tau} \neq 1$, and if $k_{\Frob^{-2} \circ \tau}=2$ then $\Frob^{-2} \circ \tau \not \in M$,
\item $k_{\tau}=2, k_{\Frob^{-1} \circ \tau}=2$ with $\tau \in M$.
\end{enumerate}

In the first case, for some $s \geq 1$ there must be an embedding $\Frob^{s} \circ \tau \in \tilde{M}$ such that $k_{\Frob^{s-1} \circ \tau}= \dots = k_{\tau}=1$. We find $(k''_{\Frob^{s-1} \circ \tau}, \dots, k''_{\tau}, k''_{\Frob^{-1} \circ \tau})=(0,p,p, \dots, p)$ or $(0,p, \dots,p, p+1)$. Theorem 1.1 of \cite{DiamondKassaei}, which implies divisibility by $\Ha_{\tau}$ if $p k_{\tau}<k_{\Frob^{-1} \circ \tau}$, gives us divisibility of $\Ha_{\Frob^{s-1} \circ \tau}$. The result now follows from Lemma \ref{HasDiv}.

In the second case, there exist $s,t \geq 0$ such that $\Frob^{s+t} \circ \tau \in \tilde{M}$ and
\begin{align}
\label{case2}
k_{\Frob^{s+t-1} \circ \tau}=\dots=k_{\Frob^s \circ \tau}=1, k_{\Frob^{s-1} \circ \tau}= \dots = k_{\tau}=2,
\end{align}
If $t=0$, we have $(k''_{\Frob^{s+t-1} \circ \tau}, \dots, k''_{\tau}, k''_{\Frob^{-1} \circ \tau})=(1,p+1,p+1, \dots, p+1)$. By \cite[Theorem 1.1]{DiamondKassaei} and Lemma \ref{HasDiv} we obtain divisibility by $\Ha_{\Frob^{s+t-1} \circ \tau} \cdots \Ha_{\tau}$. If $t \geq 1$, then we obtain $(0,p, \dots,p, p+1,\dots, p+1)$. We get divisibility by $\Ha_{\Frob^{s+t-1} \circ \tau} \cdots \Ha_{\Frob^s \circ \tau}$ by \cite[Theorem 1.1]{DiamondKassaei} and Lemma \ref{HasDiv}. We obtain divisibility by $\Ha_{\Frob^{s-1} \circ \tau} \cdots \Ha_{\tau}$ from again applying Lemma \ref{HasDiv}.  The third case is covered in the second case (note here $\tau \in M$ implies $\Frob^{-1} \circ \tau \in M$). 
\end{proof}

\begin{lemma}
\label{HasDiv}
Suppose $f$ is of weight $(k,l)$ with $k \in \Z^{\Sigma}_{\geq 0}$. Suppose we know $f$ is divisible by $\Ha_{\tau}$ and suppose for some $s \geq 1$ we have 
\[
k_{\Frob^{-1} \circ \tau}= \dots= k_{\Frob^{-s} \circ \tau}=m, k_{\Frob^{-s-1}}=m+1,
\]
for some positive integer $m \leq p+1$. Then $f$ is divisible by $\Ha_{\tau} \cdots \Ha_{\Frob^{-s} \circ \tau}$.
\end{lemma}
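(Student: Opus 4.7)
The plan is to iterate the numerical divisibility criterion of \cite[Theorem 1.1]{DiamondKassaei} already used in the proof of Theorem \ref{mainthm}: a Hilbert modular form $h$ of weight $(k'',l)$ is divisible by $\Ha_{\tau'}$ whenever $pk''_{\tau'}<k''_{\Frob^{-1}\circ\tau'}$. Writing $\sigma_j=\Frob^{-j}\circ\tau$ for brevity, we are handed a factorisation $f=\Ha_{\sigma_0}\cdot g_0$, and the aim is to build forms $g_1,\dots,g_s$ satisfying $g_{j-1}=\Ha_{\sigma_j}\cdot g_j$ for each $j=1,\dots,s$, for then
\[
f=\Ha_{\sigma_0}\Ha_{\sigma_1}\cdots \Ha_{\sigma_s}\cdot g_s
\]
is the desired factorisation. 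I would carry this out by induction on $j$ running from $1$ up to $s$.

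At the $j$th stage one needs to verify $p\,k^{(g_{j-1})}_{\sigma_j}<k^{(g_{j-1})}_{\sigma_{j+1}}$. Since $\Ha_{\sigma_i}$ has weight $pe_{\sigma_{i+1}}-e_{\sigma_i}$, the only factor among $\Ha_{\sigma_0},\dots,\Ha_{\sigma_{j-1}}$ whose weight has a nonzero $\sigma_j$-coordinate is $\Ha_{\sigma_{j-1}}$, contributing a $+p$; hence $k^{(g_{j-1})}_{\sigma_j}=k_{\sigma_j}-p=m-p$. Likewise none of these factors involves $\sigma_{j+1}$, so $k^{(g_{j-1})}_{\sigma_{j+1}}=k_{\sigma_{j+1}}$, which equals $m$ when $j<s$ and $m+1$ when $j=s$. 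The required inequality therefore reduces to $p(m-p)<m$ (the final step $j=s$ gives the weaker $p(m-p)<m+1$). Rearranging gives $m(p-1)<p^2$, i.e.\ $m<p+\tfrac{p}{p-1}$; since $\tfrac{p}{p-1}>1$, this is automatic from the hypothesis $m\le p+1$ for every prime $p$, so \cite[Theorem 1.1]{DiamondKassaei} furnishes the factorisation $g_{j-1}=\Ha_{\sigma_j}\cdot g_j$ and the induction continues.

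The main point requiring care is that for $m<p$ the intermediate coordinate $m-p$ is negative, which could in principle obstruct the divisibility criterion; however \cite[Theorem 1.1]{DiamondKassaei} is purely numerical and imposes no positivity hypothesis on the weight, so the argument proceeds unimpeded. Iterating from $j=1$ to $j=s$ then yields the claimed divisibility of $f$ by $\Ha_{\sigma_0}\Ha_{\sigma_1}\cdots\Ha_{\sigma_s}$.
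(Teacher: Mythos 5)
Your proposal is correct and follows essentially the same route as the paper: iterate the numerical criterion of \cite[Theorem 1.1]{DiamondKassaei}, observing that after dividing by $\Ha_{\Frob^{-(j-1)}\circ\tau}$ the relevant coordinate drops to $m-p$ and that $p(m-p)<m$ follows from $m\le p+1$. Your explicit bookkeeping of which Hasse-invariant weights touch each coordinate, and your remark that the criterion is purely numerical so negative intermediate entries cause no trouble, are slightly more detailed than the paper's ``it is easy to see'' but add nothing essentially different.
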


\begin{proof}
We use Theorem 1.1 of \cite{DiamondKassaei}, which implies divisibility by $\Ha_{\tau}$ if $p k_{\tau}<k_{\Frob^{-1} \circ \tau}$. After dividing by $\Ha_{\tau}$ the entry corresponding to  $\Frob^{-1} \circ \tau$ is $m-p$. We have $p(m-p)<m$ since $m \leq p+1$. It is easy to see that $p(k_{\Frob^{-(j-1)} \circ \tau}-p)<k_{{\Frob^{-j} \circ \tau}}$ for all $j \in \{2,\dots, s+1\}$. This concludes the proof.
\end{proof}

\begin{example}
Assume $p$ is inert in $F$ and suppose $k=(1,1,k_2,2,2,1,2,2)$ with $k_2 \geq 3$. We have 
\begin{align*}
k'=&(p,p+1,k_2-1,p+1,p+1,p,p+1,p+1), \\
k^{\mu_1}=&(p,p+1,k_2+1,p+1,p+1,p,p+1,p+1), \\
k^{\mu_2}=&(p,p+1,k_2-1,p+1,p+3,p,p+1,p+1), \\
k^{\mu_3}=&(p,p+1,k_2-1,p+1,p+1,p,p+1,p+3). 
\end{align*}
By the above matching procedure we find $k''=(0,p+1,k_2,1,p+2,0,p+1,p+2)$,  the result follows from Theorem 1.1 of \cite{DiamondKassaei} and Lemma \ref{HasDiv}.
\end{example}

\bibliographystyle{alpha}

\bibliography{references}

\end{document}